\theoremstyle{plain}
\newtheorem*{theorem*}{Theorem}
\newtheorem*{lemma*} {Lemma}
\newtheorem*{corollary*} {Corollary}
\newtheorem*{proposition*} {Proposition}
\newtheorem{theorem}{Theorem}[section]
\newtheorem{lemma}[theorem]{Lemma}
\newtheorem{corollary}[theorem]{Corollary}
\newtheorem{proposition}[theorem]{Proposition}
\theoremstyle{remark}
\newtheorem*{remark}{Remark}
\newtheorem*{example}{Example}
\theoremstyle{definition}
\newcommand{\MC}{{\mathcal M}}
\newcommand{\Z}{\mathbb{Z}}
\newcommand{\N}{\mathbb{N}}
\newcommand{\R}{\mathbb{R}}
\newcommand{\C}{\mathbb{C}}
\newcommand{\K}{\mathbb{K}}
\newcommand{\PP}{\mathbb{P}}
\def\ug{{\mathcal{U}(\G)}}
\def\kg{\K(\G)}
\def\ng{{\mathcal{N}(\G)}}
\def\cp{\mathbf{CP}}
\def\R{\Bbb{R}}
\def\K{\Bbb{K}}
\def\id{\mbox{id}}
\def\Z{\Bbb{Z}}
\def\C{\Bbb{C}}
\def\N{\Bbb{N}}
\def\part{\partial}
\def\BB{\mathcal{B}}
\def\g{\gamma}
\def\sm{\setminus}
\def\bn{\begin{enumerate}}
\def\en{\end{enumerate}}
\def\ba{\begin{array}}
\def\ea{\end{array}}
\def\a{\alpha}
\def\ti{\widetilde}
\def\fr12{\frac{1}{2}}
\def\im{\mbox{Im}}
\def\ker{\mbox{Ker}}
\def\be{\begin{equation}}
\def\ee{\end{equation}}
\def\K{\Bbb{K}}
\def\G{\Gamma}
\def\wti{\widetilde}
\def\K{\Bbb{K}}
\def\scra{\mathcal{A}}
\def\cp{\Bbb{CP}}
\begin{document}

\title{$L^2$--Betti numbers of hypersurface complements}
\author{Laurentiu Maxim}
\address{Department of Mathematics,
          University of Wisconsin-Madison,
          480 Lincoln Drive, Madison WI 53706-1388, USA.}
\email {maxim@math.wisc.edu}

\thanks{The author was partially supported by NSF-1005338.}

\date{\today}

\subjclass[2000]{Primary 14J70, 32S20; Secondary 46L10, 58J22.}

\keywords{$L^2$--Betti numbers, hypersurface complement, singularities, infinite cyclic cover, Alexander invariants, Milnor fibration, higher-order degree}

\begin{abstract}
In \cite{DJL07} it was shown that if $\scra$ is an affine hyperplane arrangement in $\C^n$, then at most one of the $L^2$--Betti numbers $b_i^{(2)}(\C^n\sm  \scra,\id)$  is non--zero. In this note we prove an analogous statement
for complements of complex affine hypersurfaces in general position at infinity. Furthermore, we recast and extend to this higher-dimensional setting results of \cite{FLM,LM06} about $L^2$--Betti numbers of plane curve complements. 
\end{abstract}

\maketitle

\section{Introduction}
Let $M$ be any  topological space and $\a:\pi_1(M)\to \G$ an epimorphism to a  group $\G$ (all groups are assumed countable).
Then for $i\in \N\cup \{0\}$ we can consider the $L^2$--Betti number $b_i^{(2)}(M,\a)\in [0,\infty]$.
We recall the definition and some of the most important properties of $L^2$--Betti numbers in Section \ref{section:l2betti}.

Let $X\subset \C^{n}$ ($n \geq 2$) be a reduced affine hypersurface defined by a polynomial  equation $f=f_1 \cdots f_s=0$,  where $f_i$ are the irreducible factors of $f$. Denote by 
 $X_i:=\{f_i=0\}$, $i=1,\cdots,s$, the irreducible components of $X$, and let
 $$M_X:=\C^{n}\sm X$$ be the hypersurface complement. Then $M_X$ has the homotopy type of a finite CW complex of dimension $n$.
It is well--known  that $H_1(M_X;\Z)$ is a free abelian group generated by 
the meridian loops $\gamma_i$ about the
non-singular part of each irreducible component $X_i$ of $X$.
Throughout the paper we denote by $\phi$ the map $\pi_1(M_X)\to \Z$ given by sending each meridian $\g_i$ to $1$. This is the same map as the homomorphism $f_*:\pi_1(M_X) \to \pi_1(\C^*)=\Z$ induced by $f$. We also refer to $\phi$ as the {\it  total linking number  homomorphism}.
We call an epimorphism $\a:\pi_1(M_X)\to \G$ \emph{admissible} if the total linking number homomorphism $\phi$ factors through $\a$. 

The main result of this note is the following ``nonresonance-type" theorem.
\begin{theorem} \label{mainthm}
Let $X\subset \C^{n}$  be a reduced affine hypersurface in general position at infinity, i.e., whose projective completion intersects the hyperplane at infinity transversely.
If $\a:\pi_1(M_X)\to \G$ is an admissible epimorphism, then the $L^2$--Betti numbers of the complement $M_X$ are computed by:
\[ b_i^{(2)}(M_X,\a)= \left\{ \ba{rl} 0, & \mbox{ for }i\ne n, \\ (-1)^{n}\chi(M_X),
&\mbox{ for }i=n, \ea \right.\]
where $\chi(M_X)$ denotes the Euler characteristic of $M_X$. 

In particular,  $$(-1)^n \cdot \chi(M_X) \geq 0.$$
\end{theorem}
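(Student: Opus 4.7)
The plan is to exploit the admissibility hypothesis on $\alpha$ to pass between $L^2$--Betti numbers with respect to $\alpha$ and the classical Alexander modules of the infinite cyclic cover. Concretely, since $\phi:\pi_1(M_X)\to\Z$ factors through $\alpha$, one has the infinite cyclic cover $\widetilde{M}_X\to M_X$ together with compatible $\C\tpm$-- and $\ng$--module structures on the relevant chain complexes, and the proof reduces to analysing how $\C\tpm$--torsion in the former translates into $L^2$--vanishing in the latter.

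\emph{Step 1 (geometric input: nonresonance).} The first, and main, geometric input is that for a reduced affine hypersurface $X\subset\C^n$ in general position at infinity, the Alexander modules $H_i(\widetilde{M}_X;\C)$ are finite-dimensional $\C$--vector spaces---equivalently, $\C\tpm$--torsion---for every $i<n$. This is the hypersurface analogue of Libgober's nonresonance theorem and follows, via a Lefschetz-type comparison with a generic affine hyperplane section of $\ol{X}\subset\cp^n$, from the transversality of the projective completion of $X$ with the hyperplane at infinity.

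\emph{Step 2 ($L^2$--translation).} The second step converts Alexander torsion into $L^2$--vanishing. Because $\phi$ factors through $\alpha$, the group ring $\C\tpm$ sits naturally inside the algebra of affiliated operators to $\ng$, and a standard $\ng$--dimension calculation shows that a $\C\tpm$--torsion module induces up to a module of vanishing von Neumann dimension. Applying this to the Alexander modules produced in Step~1 yields $b_i^{(2)}(M_X,\alpha)=0$ for all $i<n$. This general principle is exactly the sort of statement one expects to have been isolated in Section~\ref{section:l2betti} as a stand-alone lemma.

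\emph{Step 3 (top degree and the sign inequality).} Since $M_X$ has the homotopy type of a finite $n$--dimensional CW complex, one also has $b_i^{(2)}(M_X,\alpha)=0$ for $i>n$. The Euler--Poincar\'e identity for $L^2$--Betti numbers, $\chi(M_X)=\sum_{i\ge 0}(-1)^i b_i^{(2)}(M_X,\alpha)$, therefore collapses to $\chi(M_X)=(-1)^n b_n^{(2)}(M_X,\alpha)$, which gives the stated value at $i=n$; non-negativity of $L^2$--Betti numbers then forces $(-1)^n\chi(M_X)\ge 0$. The real obstacle is Step~1: it is there that the ``general position at infinity'' hypothesis is genuinely needed, and once the torsion of the Alexander modules is in hand, Steps~2 and~3 are formal consequences of admissibility and of basic properties of $L^2$--Betti numbers.
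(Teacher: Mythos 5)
Your Step 3 matches the paper's closing argument, and your Step 1 is a true statement (it is the finiteness result of \cite{Ma06} that the paper itself quotes). The proof nevertheless has a genuine gap at Step 2, which is exactly where the real content lies. The admissibility hypothesis gives a factorization $\phi=\wti\phi\circ\a$, hence a ring homomorphism $\C[\Gamma]\to\C[t^{\pm 1}]$; it does \emph{not} give an embedding of $\C[t^{\pm 1}]$ into $\mathcal{N}(\Gamma)$ or $\mathcal{U}(\Gamma)$, and the module $H_i(C_*((M_X)_\Gamma)\otimes_{\Z[\Gamma]}\mathcal{N}(\Gamma))$ whose von Neumann dimension defines $b_i^{(2)}(M_X,\a)$ is not ``induced up'' from the Alexander module $H_i(\wti M_X;\C)$ --- the base change goes in the opposite direction (from the $\Gamma$-cover down to the infinite cyclic cover). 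So there is no ``standard $\mathcal{N}(\Gamma)$-dimension calculation'' converting $\C[t^{\pm 1}]$-torsion of the classical Alexander modules into vanishing of $b_i^{(2)}(M_X,\a)$ for an \emph{arbitrary} admissible $\Gamma$; no such lemma appears in Section 2, and for general $\Gamma$ (where no skew field is available) I know of no formal statement of this kind. Even in the LITFA case, finiteness of $\dim_\C H_i(\wti M_X;\C)$ does not formally bound $\dim_{\K(\wti\G)}H_i(\wti M_X;\K(\wti\G))$ or force $\dim_{\K(\G)}H_i(M_X;\K(\G))=0$; indeed the paper proves the corresponding finiteness (Theorem \ref{mainthm2}) by a separate geometric argument rather than by citing the classical finiteness of \cite{Ma06}.

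The paper's route supplies precisely the bridge you are missing, and it is geometric rather than algebraic: transversality at infinity is used not to make Alexander modules torsion, but to identify the link-at-infinity complement $M_X^\infty$ (up to homotopy) with the total space of the global Milnor fibration over $S^1$ of the homogeneous polynomial $h$ defining $\bar X\cap H$. L\"uck's fibration theorem (Lemma \ref{Lem3}, which needs an actual fibration of a finite-type complex, not merely torsion Alexander modules) then kills \emph{all} $b_i^{(2)}(M_X^\infty,\a^\infty)$, and the Lefschetz-type comparison $\pi_i(M_X,M_X^\infty)=0$ for $i\le n-1$, promoted to the $\Gamma$-covers and fed into Lemma \ref{Lem2}, transfers this vanishing to $b_i^{(2)}(M_X,\a)$ for $i\le n-1$ (Theorem \ref{infinity}). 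If you want to salvage your outline, you must either prove the transfer principle you assert in Step 2 (at least for the class of groups $\Gamma$ allowed by the theorem), or replace Step 2 by the comparison-at-infinity plus fibration argument above.
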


As an immediate consequence we note the following:
\begin{corollary} If $X\subset \C^{n}$  is a reduced affine hypersurface defined by a homogeneous polynomial (i.e., $X$ is the affine cone on a reduced projective hypersurface in $\cp^{n-1}$), and $\a:\pi_1(M_X)\to \G$ is an admissible epimorphism, then  $$b_i^{(2)}(M_X,\a)=0 \ , \ {\text for \ all} \ \ i \geq 0.$$
\end{corollary}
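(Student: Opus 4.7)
The key idea is to exploit the free $\C^*$-action on $M_X$ arising from the homogeneity of $f$. Since $f(\lambda z) = \lambda^d f(z)$ with $d = \deg f \geq 1$, scalar multiplication by $\C^*$ preserves $M_X$ and acts freely, with quotient $\cp^{n-1}\setminus V$, where $V \subset \cp^{n-1}$ is the projective hypersurface defined by $f$. Let $c \in \pi_1(M_X)$ denote the homotopy class of an orbit loop $\lambda \mapsto \lambda z_0$, $|\lambda| = 1$.

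A general fact about continuous actions of a topological group $G$ on a pointed space is that the image of the induced map $\pi_1(G) \to \pi_1(M_X)$ lies in the center of $\pi_1(M_X)$; this follows because the action map $G \times M_X \to M_X$ induces a group homomorphism $\pi_1(G) \times \pi_1(M_X) \to \pi_1(M_X)$ whose two factor-restrictions must commute in the image. In particular $c$ is central. Since $f$ sends the orbit loop $\lambda \mapsto \lambda z_0$ to $\lambda \mapsto \lambda^d f(z_0)$, a loop of winding number $d$ around $0 \in \C^*$, we have $\phi(c) = d \neq 0$. Admissibility of $\alpha$ gives $\phi = \psi \circ \alpha$ for some $\psi: \Gamma \to \Z$, so $\alpha(c)$ is a central element of $\Gamma$ of infinite order, generating an infinite cyclic normal (hence amenable) subgroup of $\Gamma$.

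The conclusion now follows from the standard vanishing principle for $L^2$-Betti numbers: whenever the target $\Gamma$ of $\alpha$ contains an infinite normal amenable subgroup, the twisted $L^2$-Betti numbers $b_i^{(2)}(M, \alpha)$ of any finite CW complex $M$ all vanish. This is a twisted version of the classical Cheeger--Gromov--L\"uck vanishing theorem and is available in the standard references on $L^2$-invariants. The main obstacle is to verify that this principle applies in the precise twisted form needed here (for $b_i^{(2)}(M_X, \alpha)$ rather than for $b_i^{(2)}(\Gamma)$). An alternative, more self-contained route is to work instead with the global Milnor fibration $f: M_X \to \C^*$, which is locally trivial because $f$ is homogeneous: it presents $M_X$ as (a space homotopy equivalent to) the mapping torus of the monodromy on the Milnor fiber $F$, and since the infinite cyclic cover of $M_X$ corresponding to $\phi$ is then homotopy equivalent to the finite CW complex $F$, one concludes using the factorization $\phi = \psi \circ \alpha$ together with a spectral sequence or change-of-rings argument to transfer vanishing from the $\Z$-cover to the full $\Gamma$-cover.
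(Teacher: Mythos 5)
There is a genuine gap, and it is exactly the step you flagged yourself. The ``standard vanishing principle'' driving your first route is false in the twisted form you need: it is simply not true that $b_i^{(2)}(M,\a)=0$ for every finite CW complex $M$ whenever the target $\G$ of $\a$ contains an infinite normal amenable subgroup. Lemma \ref{lem:propb2}(3) forces $\sum_i(-1)^i b_i^{(2)}(M,\a)=\chi(M)$ no matter what $\G$ is, so the principle would wrongly imply $\chi(M)=0$ for every such $M$; concretely, for $M=S^1\vee S^2$ and $\a=\id:\Z\to\Z$ (and $\Z$ is an infinite amenable normal subgroup of itself) one computes $b_2^{(2)}(M,\a)=1$. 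The Cheeger--Gromov/L\"uck theorem you have in mind is a statement about the group invariants $b_i^{(2)}(\G)$, i.e.\ about classifying spaces/aspherical complexes, and $M_X$ is not aspherical in general, so your (correct) observation that $\a(c)$ is a central element of infinite order in $\G$ does not by itself yield the conclusion.

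Your fallback route is the right one in substance --- it is the paper's: homogeneity of $f$ gives the global Milnor fibration $F\hookrightarrow M_X\overset{f}{\to}\C^*$ with connected fiber (as $X$ is reduced), and then the vanishing of all $b_i^{(2)}(M_X,\a)$ for any admissible $\a$ is exactly Lemma \ref{Lem3} (L\"uck's mapping torus theorem, \cite[Thm.1.39]{Lu02}); alternatively, the paper deduces the corollary from Theorem \ref{mainthm} together with $\chi(M_X)=0$, which again comes from the fibration. But as written, your transfer step is not justified: there is no general ``spectral sequence or change-of-rings argument'' that converts vanishing of the $L^2$--Betti numbers of the $\Z$--cover (equivalently, finiteness of $H_*(F;\C)$) into vanishing of $b_i^{(2)}(M_X,\a)$ for an \emph{arbitrary} group $\G$ through which $\phi$ factors --- the corollary does not assume $\G$ is LITFA, and the Ore-localization/flatness mechanism (as in the paper's final Remark and Section \ref{CH}) is only available over the skew fields $\K(\G)$ attached to LITFA groups. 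The actual content of \cite[Thm.1.39]{Lu02} is proved by a limiting argument over the finite cyclic covers pulled back from $S^1$ (multiplicativity of $L^2$--Betti numbers plus a cell-count bound independent of the cover), not by a change of rings. So to close the gap you should simply invoke Lemma \ref{Lem3} (or reproduce its proof), noting that admissibility supplies precisely the factorization $\pi_1(M_X)\overset{\a}{\to}\G\overset{\ti\phi}{\to}\Z$ of $f_*$ that the lemma requires.
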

\noindent Indeed, it is easy to see that such an affine cone $X\subset \C^{n}$  is in general position at infinity. Moreover, by the existence of a {\it global Milnor fibration} \cite{Mi68}, the complement $M_X$ is the total space of a fibration over $\C^*$, hence $\chi(M_X)=0$.

In \cite{DJL07} it was shown that if $\scra$ is an affine hyperplane
arrangement in $\C^n$, then at most one of the $L^2$--Betti numbers
$b_i^{(2)}(\C^n\sm  \scra,\id)$  is non--zero. Theorem \ref{mainthm} can be
seen as  an analogous statement for the complement of a hypersurface in $\C^{n}$ which is in general position at infinity. In particular, we recast and generalize to arbitrary dimensions some results of \cite{LM06,FLM}, where the case $n=2$ of plane curve complements   was considered. 

\bigskip

In this note, we are also concerned with the $L^2$--Betti numbers of the infinite cyclic cover defined by the total linking number homomorphism $\phi$. More precisely, 
given an affine hypersurface $X \subset \C^n$, we denote by $\wti M_X$ the infinite
cyclic cover of $M_X$ corresponding to $\phi$. Moreover, for an admissible
epimorphism $\a:\pi_1(M_X)\to \G$ we let
$\ti{\G}:=\im\{\pi_1(\ti M_X)\to \pi_1(M_X)\xrightarrow{\a}\G\}$, and
we denote the induced map $\pi_1(\ti M_X)\to \ti{\G}$ by $\ti{\a}$.  The $L^2$--Betti numbers we are interested in are 
\[ b_i^{(2)}(\ti M_X,\ti{\a}:\pi_1(\ti M_X)\to \ti{\G}).\]
In \cite{Ma06}, the author showed that for hypersurfaces $X \subset \C^n$  in general position at infinity  the ordinary Betti numbers $b_i(\widetilde M_X)$ of the infinite cyclic cover $\wti M_X$  are finite for all $0\leq i \leq n-1$. In this note, we prove a non-commutative generalization of this fact. More precisely, we show the following:
\begin{theorem} \label{mainthm2}
Assume that the affine hypersurface $X \subset \C^n$ is in general position at infinity. Then the $L^2$--Betti numbers $b_i^{(2)}(\widetilde M_X,\ti{\a})$ are finite for all $0\leq i \leq n-1$.
\end{theorem}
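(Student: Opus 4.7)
The plan is to adapt, in the $L^2$ setting, the argument of \cite{Ma06} for ordinary Betti numbers, replacing the role of the Alexander module $H_i(\widetilde M_X;\Q)$ by a suitable module over a skew Laurent polynomial ring built from the von Neumann algebra ${\mathcal{N}(\ti{\G})}$.

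First, I would set up a Shapiro-type identification. Since $\phi$ factors through $\alpha$, one has $\ker\alpha \subseteq \ker\phi = \pi_1(\widetilde M_X)$, so the cover of $\widetilde M_X$ associated to $\ker\ti{\a}$ coincides with the cover of $M_X$ associated to $\ker\alpha$. The corresponding deck groups fit into a short exact sequence $1 \to \ti{\G} \to \G \to \Z \to 1$, which splits because $\Z$ is free; fixing a lift $t \in \G$ of a generator of $\Z$, one forms the twisted Laurent polynomial ring $R := {\mathcal{N}(\ti{\G})}\tpm_\theta$, where $\theta$ denotes conjugation by $t$. A standard calculation identifies $\mathrm{Ind}_{\pi_1(\widetilde M_X)}^{\pi_1(M_X)} {\mathcal{N}(\ti{\G})}$ with $R$ as a $\pi_1(M_X)$-module (via $\alpha$), and Shapiro's lemma then yields
\[ H_i(\widetilde M_X;{\mathcal{N}(\ti{\G})}) \;\cong\; H_i(M_X; R). \]
Since $M_X$ has the homotopy type of a finite CW complex of dimension $n$, the chain complex $C_*(M_X; R)$ is concentrated in degrees $0,\dots,n$ with each term a finitely generated free $R$-module, so each $H_i(M_X; R)$ is finitely generated as an $R$-module.

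Next, I would show that $H_i(M_X; R)$ has vanishing L\"uck dimension over $R$ for $i<n$. The key input here is the dimension-compatibility of the ring inclusion $R \hookrightarrow \ng$ coming from the theory of skew group rings (as developed by Friedl-L\"uck and collaborators): L\"uck's dimension over $R$ agrees with the $\ng$-dimension after base change, so $\dim_R H_i(M_X; R) = \dim_{\ng} H_i(M_X;\ng)$. Applying Theorem~\ref{mainthm} then gives $\dim_R H_i(M_X; R) = 0$ for all $i<n$.

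Finally, I would invoke the non-commutative analog of the classical fact that a finitely generated torsion $\Q[t^{\pm 1}]$-module is finite-dimensional over $\Q$: a finitely generated $R$-module of vanishing $R$-dimension, viewed as a module over ${\mathcal{N}(\ti{\G})}$, has finite ${\mathcal{N}(\ti{\G})}$-dimension. Combined with the identification from the first step, this yields $b_i^{(2)}(\widetilde M_X, \ti{\a}) < \infty$ for $0 \leq i \leq n-1$. The main obstacles are the dimension-compatibility of the inclusion $R \hookrightarrow \ng$ and the final torsion-to-finite-dimension lemma; both are technical results from the $L^2$-invariant theory of skew group rings, and their careful verification is the heart of the argument.
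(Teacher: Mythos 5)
Your proposal takes a genuinely different, purely algebraic route from the paper, and as written it has a real gap. The paper's own proof is geometric: by the Lefschetz-type comparison with the link at infinity (Theorem \ref{infinity}, which rests on Lemma \ref{Lem2}), it suffices to control $b_i^{(2)}(\ti M_X^{\infty};\ti\a^{\infty})$ for $i\leq n-1$; transversality of $\bar X$ and $H$ makes $M_X^{\infty}$ a circle fibration over $H\sm (\bar X\cap H)$, and the infinite cyclic cover $\ti M_X^{\infty}$ is then homotopy equivalent to the Milnor fiber of the homogeneous polynomial $h$ defining $\bar X\cap H$, i.e., to a \emph{finite} CW complex, so finiteness of its $L^2$--Betti numbers is immediate. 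No module theory over twisted Laurent rings is needed, and Theorem \ref{mainthm} is not used as an input (so your reliance on it is not circular, since the paper proves it independently of the finiteness statement).

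The gap in your argument is that its two ``technical results'' about $R=\mathcal{N}(\ti{\G})[t^{\pm 1}]_{\theta}$ are exactly the crux and are not available in the generality of Theorem \ref{mainthm2}, where $\G$ is an arbitrary countable group: (i) $R$ is not a von Neumann algebra and carries no standard L\"uck dimension, so the asserted compatibility $\dim_R(-)=\dim_{\ng}(-\otimes_R\ng)$ is not an established theorem you can quote; and (ii) the ``torsion-to-finite-dimension'' lemma requires a PID-like structure theory of finitely generated $R$-modules, which $R$ does not have for infinite $\ti\G$. Even the intermediate claim that each $H_i(M_X;R)$ is finitely generated over $R$ is unjustified, since $R$ is in general not Noetherian and cycle submodules of a finite free complex need not be finitely generated. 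All of these steps do work after replacing $R$ by the skew Laurent polynomial ring $\K(\ti{\G})[t^{\pm 1}]$ when $\G$ is LITFA (it is then a PID with Ore quotient skew field $\K(\G)$, and Shapiro plus flatness give the transfer): that is precisely the argument of \cite{FLM} for $n=2$ and the content of the paper's closing Remark, but it proves only the LITFA case, not the theorem as stated. To cover all admissible epimorphisms you would have to either develop genuinely new dimension theory for $\mathcal{N}(\ti{\G})[t^{\pm 1}]_{\theta}$, or argue geometrically via the link at infinity as the paper does.
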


As it was already observed in several recent papers, e.g., see \cite{DL,DM07,Ma06}, hypersurfaces  in general position at infinity   behave much like weighted homogeneous hypersurfaces up to homological degree $n-1$; see Prop.\ref{wh} for a computation of $L^2$--Betti numbers of weighted homogeneous hypersurface complements. The above Theorem \ref{mainthm2} comes as a confirmation of this philosophy. 

\bigskip

Another motivation for studying $L^2$--Betti numbers of the infinite cyclic cover $\wti M_X$ comes from the fact that for appropriate choices of the group $\G$ these numbers  specialize  into several classical Alexander-type invariants of the complement. 
For instance, following work of Cochran and Harvey (cf. \cite{Co04,Ha05}), we can consider  the following homomorphism
\[ \pi_n: \pi_1(M_X)\to \pi_1(M_X)/\pi_1(M_X)_r^{(m+1)}=:\G_m,\]
where given a group $G$ we denote by $G_r^{(m)}$ the $m$--th term in the rational derived series of $G$. The group $\G_m$ is a poly-torsion-free-abelian (PTFA) group and we  define the {\it higher-order degrees} 
$\delta_{i,m}(X)$  of $X$ as the dimension of the $i$-th homology of $\ti M_X$ with coefficients in the skew field
associated to $\ti{\G}_m$. Similar invariants were defined and studied in \cite{LM06,LM07} in the case of plane curves. Moreover, as noted in \cite{FLM}, the higher-order degrees $\delta_{i,m}(X)$ can be regarded as $L^2$--Betti numbers of the infinite cyclic cover $\ti M_X$, so the results of this note characterize the Cochran-Harvey invariants as well. At this point we want to emphasize that the higher-order degrees of a space $M$, hence also the  $L^2$--Betti numbers of the infinite cyclic cover $\wti M$, may as well be infinite, since $\wti M$ is not in general a finite CW-complex. For example, for a topological space $M$ with $\pi_1(M)$ a free group with at least two generators, the higher-order degrees $\delta_{1,m}$ are infinite (cf. \cite[Ex.8.2]{Ha05}). The finiteness results of this note are rigidity properties specific to the complex algebraic setting. We should also mention that if $X$ is irreducible and in general position at infinity, then it is easy to see (cf. \cite{LM06}) that for all $0 \leq i \leq n-1$ the integer $\delta_{i,0}(X)$ equals the degree of the $i$-th Alexander polynomial of $X$, or as shown in \cite{Ma06}, the $i$-th Betti numbers of the infinite cyclic cover $\wti M_X$. In the general reducible case, Libgober pointed out a nice relationship between $\delta_{i,0}(X)$ and the {\it support} of the $i$-th universal abelian Alexander module of the complement $M_X$ (for more details, see \cite{LM06} and the references therein).

\bigskip

Our result in Theorem \ref{mainthm} is reminiscent of a similar calculation by Jost-Zuo \cite{JZ} of $L^2$--Betti numbers of a compact K\"ahler manifold of non-positive sectional curvature. This was considered in relation to an old question of Hopf whether the Euler characteristic of a compact manifold $M$ of even real dimension $2n$ has sign equal to $(-1)^n$, provided $M$ admits a metric of negative sectional curvature. However, the statement of our Theorem \ref{mainthm} is metric independent (and the degree of $X$ is arbitrary). 
Finally, one should not be misled by these calculations into thinking that the $L^2$--Betti numbers of finite CW-complexes are always integers, or that most of them usually vanish. In fact, the Atiyah conjecture asserts that these $L^2$--Betti numbers are always rational; see \cite{Lu,Lu02}  for more details on this conjecture and related matters.

\bigskip

This paper is organized as follows. In Section \ref{section:l2betti} we recall the definition of $L^2$--Betti numbers and list some of their properties. We also express the Cochran-Harvey  higher-order degrees of an affine hypersurface complement as $L^2$--Betti numbers of the  infinite cyclic cover of the complement. The main results, Theorem \ref{mainthm} and \ref{mainthm2} are proved in Section \ref{van}.

\ack{The author would like to thank Anatoly Libgober and Stefan Friedl for sharing their comments on an earlier version of the paper.}

\section{$L^2$--Betti numbers} \label{section:l2betti}

\subsection{The von Neumann algebra and its localizations}

Let $\G$ be a countable group. Define $$l^2(\G):=\{ f:\G\to \C \, | \, \sum_{g\in \G}  |f(g)|^2<\infty \}$$
the Hilbert space of square-summable functions on $\Gamma$. Then $\G$ acts on $l^2(\G)$ by right multiplication, i.e., $(g\cdot f)(h)=f(hg)$.
This defines an injective map $\C[\G]\to \BB(l^2(\G))$, where $\BB(l^2(\G))$ is the set of bounded operators on $l^2(\G)$.
We henceforth view $\C[\G]$ as a subset of $\BB(l^2(\G))$.

The \emph{von Neumann algebra} $\ng$ of $\G$ is defined as the closure of $\C[\G]\subset \BB(l^2(\G))$ with respect to pointwise convergence in
$\BB(l^2(\G))$. Note that any $\ng$--module $\MC$ has a dimension $\dim_{\ng}(\MC)\in \R_{\geq 0}\cup \{\infty\}$. We refer to \cite[Def.6.20]{Lu02} for details.

\subsection{$L^2$--Betti numbers. Definition. Properties}

Let $M$ be a topological space (not necessarily compact) and let
$\a:\pi_1(M)\to \G$ be an epimorphism to a group. Denote by $M_{\G}$ the
regular covering of $M$ corresponding to $\a$, and consider 
the $\ng$--chain complex
\[ C_*^{sing}(M_{\G})\otimes_{\Z[\G]}\ng,\]
where $C_*^{sing}(M_{\G})$ is the singular chain complex of $M_{\G}$
with right $\G$--action given by covering
translation, and $\G$
acts canonically on $\ng$ on the left. The {\it $i$-th $L^2$--Betti
number of the pair $(M,\a)$} is defined as
\[ b_i^{(2)}(M,\a):=\dim_{\ng}(H_i(C_*^{sing}(M_{\G})\otimes_{\Z[\G]}\ng))\in [0,\infty].\]
We refer to \cite[Def.6.50]{Lu02} for more details. Note that if $M$ is a CW-complex of finite type, then the cellular chain complex $C_*(M_{\G})$ can be used in the above definition of $L^2$--Betti numbers.

In the following lemma we summarize some of the properties of $L^2$--Betti numbers.
We refer to  \cite[Thm.6.54,~Lem.6.53~and~Thm.1.35]{Lu02}  for the proofs.

\begin{lemma} \label{lem:propb2}
Let $M$ be a topological space  and let $\a:\pi_1(M)\to \G$ be an epimorphism to a group.
\bn
\item $b_i^{(2)}(M,\a)$ is a homotopy invariant of the pair $(M,\a)$.
\item $b_0^{(2)}(M,\a)=0$ if $\G$ is infinite and $b_0^{(2)}(M,\a)=\frac{1}{|\G|}$ if $\G$ is finite.
\item If $M$ is a finite CW--complex, then
\[ \sum_i \,(-1)^i \,b_i^{(2)}(M,\a)=\chi(M),\] where $\chi(M)$ denotes the Euler characteristic of $M$.
\en
\end{lemma}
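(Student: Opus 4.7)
The plan is to address the three assertions in sequence, leaning throughout on three basic properties of the $\ng$-dimension function: additivity on short exact sequences, the normalization $\dim_\ng(\ng)=1$, and the computation of $\dim_\ng$ via the canonical faithful normal trace on $\ng$.

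For part (1), a homotopy equivalence $f\colon M\to M'$ that is compatible with the epimorphisms to $\G$ lifts to a $\G$-equivariant homotopy equivalence $\wti f\colon M_\G\to M'_\G$ between the associated $\G$-covers. Applying singular chains yields a $\Z[\G]$-linear chain homotopy equivalence $C_*^{sing}(M_\G)\to C_*^{sing}(M'_\G)$. Tensoring with $\ng$ over $\Z[\G]$ preserves chain homotopies, so the two $L^2$-chain complexes are $\ng$-chain homotopy equivalent, hence have isomorphic homology modules, and therefore equal $\ng$-dimensions in every degree.

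For part (2), assume first that $M$ is path connected (otherwise apply the argument to each path component). By right exactness of the tensor product, $H_0(C_*^{sing}(M_\G)\otimes_{\Z[\G]}\ng)\cong \ng/I_\G\cdot\ng$, where $I_\G\subset\Z[\G]$ is the augmentation ideal. Viewing $\ng$ as a subalgebra of bounded operators on $l^2(\G)$, the closure of $I_\G\cdot l^2(\G)$ has orthogonal complement equal to the space of $\G$-invariant $l^2$-functions: this is one-dimensional (spanned by the constant function $|\G|^{-1/2}$) when $\G$ is finite and zero when $\G$ is infinite. The $\ng$-dimension of $\ng/I_\G\ng$ is then the trace of the projection onto this orthogonal complement, giving $1/|\G|$ or $0$ accordingly. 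For part (3), the assumption that $M$ is a finite CW-complex lets us replace the singular by the cellular chain complex, so that $C_*(M_\G)$ is a bounded complex of finitely generated free right $\Z[\G]$-modules with $\mathrm{rank}_{\Z[\G]} C_i(M_\G)$ equal to the number $c_i(M)$ of $i$-cells of $M$. Hence $C_*(M_\G)\otimes_{\Z[\G]}\ng$ is a bounded complex of finitely generated free $\ng$-modules of the same ranks, and the Euler--Poincar\'e principle for $\dim_\ng$ (valid precisely because of additivity on short exact sequences of finitely presented $\ng$-modules) yields $\sum_i(-1)^i b_i^{(2)}(M,\a)=\sum_i(-1)^i c_i(M)=\chi(M)$.

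The only place where genuine analytic content appears is the infinite-group case of (2), namely the vanishing of the trace of the projection onto the $\G$-invariant $l^2$-functions when $\G$ is infinite; the remaining assertions then reduce to formal manipulations with the dimension axioms together with standard covering-space and cellular chain-complex machinery.
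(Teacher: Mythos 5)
Your proof is correct, and it is essentially the standard argument: the paper itself does not prove this lemma at all, but simply cites L\"uck \cite[Thm.~6.54, Lem.~6.53, Thm.~1.35]{Lu02}, and your sketch reproduces the proofs one finds there (equivariant lifting of homotopy equivalences plus invariance of $\dim_{\mathcal{N}(\Gamma)}$ under chain homotopy for (1); identification of $H_0$ with $\mathcal{N}(\Gamma)\otimes_{\Z[\Gamma]}\Z$ and the trace of the projection onto the $\Gamma$-invariant vectors in $l^2(\Gamma)$ for (2); the Euler--Poincar\'e principle for the finite free $\mathcal{N}(\Gamma)$-chain complex for (3)). Two small points deserve explicit mention in a fully rigorous write-up, and both are exactly the content of L\"uck's extended dimension theory rather than gaps in your outline: in (2), passing from the algebraic quotient $\mathcal{N}(\Gamma)/I_\Gamma\mathcal{N}(\Gamma)$ to the Hilbert-space picture uses that the dimension of a finitely generated submodule equals the dimension of its closure (so that $\dim$ of the quotient is the trace of the projection onto the orthogonal complement of $\overline{I_\Gamma\, l^2(\Gamma)}$); and in (3), the Euler--Poincar\'e argument needs additivity of $\dim_{\mathcal{N}(\Gamma)}$ on short exact sequences of arbitrary (not merely finitely presented) modules, or alternatively the observation that, $\mathcal{N}(\Gamma)$ being semihereditary, all cycle, boundary and homology modules of the finite free complex are finitely generated. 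With those provisos your argument is complete and coincides with the cited source.
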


\begin{remark} For the definition of $L^2$--Betti numbers of a pair $(M,\a)$ we do not need to require  that the homomorphism $\a$ is surjective. However, we can reduce ourselves to this case since, for an arbitrary homomorphism $\a:\pi_1(M) \to \G$, we have that \be b_i^{(2)}(M,\a:\pi_1(M)\to \im(\a))=b_i^{(2)}(M,\a:\pi_1(M)\to \G).\ee
\end{remark}

A {\it free $\Gamma$-CW complex} $\ti M$ is the same as a regular covering $p:\wti M \to M$ of a CW complex $M$ with $\G$ as group of covering transformations. As a generalization of the homotopy invariance of $L^2$--Betti numbers, we have the following result (see  \cite[Thm.1.35(1)]{Lu02}):

\begin{lemma}\label{Lem2} Let $\ti f:\wti N \to \wti M$ be a $\G$-map of free  $\Gamma$-CW complexes of finite type, and denote by $f: N \to M$ the induced map on the corresponding orbit spaces. If the homomorphism $H_i(\ti f;\C):H_i(\wti N;\C) \to H_i(\wti M;\C)$ is bijective for $i \leq d-1$ and surjective for $i=d$, then:
\be b_i^{(2)}(M;\a:\pi_1(M) \to \G) = b_i^{(2)}(N;\a \circ f_*:\pi_1(N) \overset{f_*}{\to} \pi_1(M) \overset{\a}{\to} \G) \ , \ {\rm for} \ i<d,\ee
and \be b_d^{(2)}(M;\a) \leq b_d^{(2)}(N;\a \circ f_*).\ee
\end{lemma}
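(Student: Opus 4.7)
The strategy is to reduce $\ti f$ to a $\G$-equivariant inclusion via the mapping cylinder, and then read off the conclusion from the long exact sequence of the pair in $L^2$-homology. First I would replace $\ti f:\wti N \to \wti M$ by its inclusion into the $\G$-equivariant mapping cylinder $\mathrm{Cyl}(\ti f)$, which is itself a free $\G$-CW complex of finite type and is $\G$-homotopy equivalent to $\wti M$. This preserves both the hypothesis on $\C$-homology and (by the homotopy invariance recorded in Lemma~\ref{lem:propb2}) the $L^2$-Betti numbers of the target, so we may assume $\wti N \subset \wti M$ is a $\G$-subcomplex and $\ti f$ is the inclusion.

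Next I would exploit the relative cellular chain complex $C_*(\wti M,\wti N)$, which is a bounded complex of finitely generated free $\Z[\G]$-modules. Tensoring the short exact sequence of cellular chain complexes with $\ng$ over $\Z[\G]$ yields a short exact sequence of $\ng$-chain complexes and hence a long exact sequence in $L^2$-homology. Since the von Neumann dimension is additive on exact sequences of $\ng$-modules, the desired equalities and inequality will follow once one proves
\be
\dim_{\ng} H_i\bigl(C_*(\wti M,\wti N)\otimes_{\Z[\G]}\ng\bigr)=0 \quad\text{for all } i\leq d.
\ee

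To establish this relative vanishing, the hypothesis on $H_*(\ti f;\C)$ together with the long exact sequence of the pair in ordinary $\C$-homology gives $H_i(\wti M,\wti N;\C)=0$ for $i\leq d$; equivalently, $C_*(\wti M,\wti N)\otimes_{\Z}\C$ is acyclic in degrees $\leq d$. The technical heart of the argument is to upgrade this $\C$-acyclicity to vanishing of the von Neumann dimension of the $\ng$-homology of the same chain complex in the same range. This step is not formal and uses Lück's general dimension theory for $\ng$-modules as developed in \cite[Ch.~6]{Lu02}: for a complex of finitely generated free $\Z[\G]$-modules, the $\C$-coefficient acyclicity controls the $\ng$-coefficient $L^2$-homology in the sense of von Neumann dimension.

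Finally, plugging the relative vanishing into the long exact sequence collapses it in degrees $\leq d$: for $i<d$ the two neighboring relative terms both have dimension zero and additivity of $\dim_{\ng}$ forces $b_i^{(2)}(M;\a)=b_i^{(2)}(N;\a\circ f_*)$; for $i=d$ only the preceding relative term is forced to vanish, which yields the surjection of $H_d^{(2)}(\wti N)$ onto $H_d^{(2)}(\wti M)$ and hence $b_d^{(2)}(M;\a)\leq b_d^{(2)}(N;\a\circ f_*)$. The main obstacle is the dimension-theoretic upgrade from $\C$-acyclicity to $L^2$-acyclicity of the relative complex; the mapping cylinder reduction and the long exact sequence manipulations are formal.
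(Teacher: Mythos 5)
The paper offers no argument for this lemma at all: it is quoted directly from L\"uck \cite[Thm.1.35(1)]{Lu02}, so there is no internal proof to compare with, and your outline (mapping cylinder, relative chain complex, long exact sequence, additivity of $\dim_{\ng}$) is essentially the standard proof of that cited theorem.

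One caveat about the step you call the technical heart. The loose principle ``$\C$-acyclicity of a finitely generated free $\Z[\G]$-complex controls the von Neumann dimension of its $\ng$-homology'' is false degree by degree: for $\G$ free on two generators, the cellular complex $0\to\C[\G]^2\to\C[\G]\to 0$ of the universal cover of $S^1\vee S^1$ has $H_1=0$ over $\C$ but $b_1^{(2)}=1$. What saves your argument is that you have acyclicity in \emph{all} degrees $i\le d$, down to the bottom of the nonnegatively graded relative complex, and in that range the needed vanishing is true and in fact elementary, with no appeal to the extended dimension theory required: exactness of $C_*(\wti M,\wti N)\otimes_\Z\C$ in degrees $0,\dots,d$ makes the cycle submodules $Z_i$ finitely generated projective over $\C[\G]$ for $i\le d$ (each sequence $0\to Z_i\to C_i\otimes_\Z\C\to Z_{i-1}\to 0$ splits), and right-exactness of $-\otimes\ng$ then shows that $H_i\bigl(C_*(\wti M,\wti N)\otimes_{\Z[\G]}\ng\bigr)$ is literally zero for $i\le d$, not merely of dimension zero. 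This also fixes a small imprecision at the end: from dimension zero alone you only get the inequality at $i=d$ via additivity (the cokernel of $H_d^{(2)}(\wti N)\to H_d^{(2)}(\wti M)$ embeds in a dimension-zero module), whereas the surjection you assert requires the genuine vanishing just described. With that step made precise, your plan is a correct proof.
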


Finally, the $L^2$--Betti numbers provide obstructions for a closed manifold to fiber over the circle $S^1$. More precisely, by \cite[Thm.1.39]{Lu02}, we have the following:

\begin{lemma}\label{Lem3} 
Let $M$ be a CW complex of finite type, and $f:M\to S^1$ a fibration with connected fiber $F$. Assume that the epimorphism $f_*:\pi_1(M) \to \pi_1(S^1)=\Z$ admits a  factorization $\pi_1(M)\overset{\a}{\to} \G \overset{\beta}{\to} \Z$, with $\a$ and $\beta$ epimorphisms . Then $$b_i^{(2)}(M,\a)=0 \ , \ \ {\rm for \ all} \ i \geq 0.$$
\end{lemma}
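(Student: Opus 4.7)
The approach is to reduce to the classical vanishing of $L^2$-Betti numbers for mapping tori, transported across the factorization $f_* = \beta\circ\a$. Since $f:M\to S^1$ is a fibration with connected fiber $F$, by the homotopy invariance in Lemma \ref{lem:propb2}(1) I may replace $M$ by the mapping torus of the monodromy $\phi:F\to F$. Correspondingly $\pi_1(M)=\pi_1(F)\rtimes_{\phi_*}\Z$, and setting $K:=\ker\beta\subset\G$ one has a parallel group extension $1\to K\to \G\to\Z\to 1$; surjectivity of $\a$ forces $\a(\pi_1(F))=K$.

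First I would pass to the $\G$-cover $M_\G$. Because $f_*$ factors through $\a$, the composite $M_\G\to M\xrightarrow{f}S^1$ lifts through the universal cover $\R\to S^1$, and this exhibits $M_\G$ as a $\G$-equivariant infinite mapping telescope built out of copies of the $K$-cover $\ti F$ of $F$, glued by a chosen lift $\ti\phi:\ti F\to\ti F$ of the monodromy. Fixing $t\in\G$ with $\beta(t)=1$, the telescope decomposition yields a short exact sequence of $\ng$-chain complexes
\begin{equation*}
0 \to C_*(\ti F)\otimes_{\Z[K]}\ng \xrightarrow{\,1 - t\cdot\ti\phi_*\,} C_*(\ti F)\otimes_{\Z[K]}\ng \to C_*(M_\G)\otimes_{\Z[\G]}\ng \to 0,
\end{equation*}
which is the $\ng$-analogue of the classical Wang sequence for a mapping torus.

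The main step, and the principal obstacle, is to verify that the chain map $1-t\cdot\ti\phi_*$ is a weak isomorphism of $\ng$-modules, i.e., has trivial $\ng$-dimensional kernel and cokernel in each degree. This is exactly where the hypotheses enter: $t$ has infinite order in $\G$ (because $\beta(t)$ generates $\Z$), which via standard spectral theory in $\ng$ implies that left multiplication by $1-u$ is a weak isomorphism of $\ng$ for any infinite-order $u\in\G$, since the spectral measure of the unitary $u$ on $\ell^2(\G)$ carries no atom at $1$. Combined with the fact that $\ti\phi_*$ is a chain-homotopy equivalence (since $\phi$ is a homotopy equivalence of $F$), this promotes to a weak isomorphism at the level of the displayed chain complex.

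Once that step is in place, the induced long exact sequence in $\ng$-homology forces $H_i\bigl(C_*(M_\G)\otimes_{\Z[\G]}\ng\bigr)=0$ for every $i$, so $b_i^{(2)}(M,\a)=0$ for all $i\geq 0$. This reproduces Lück's argument for \cite[Thm.1.39]{Lu02} in the form needed here; alternatively one could simply invoke that theorem directly, after noting that the factorization of $f_*$ through $\a$ is precisely what is required to apply it.
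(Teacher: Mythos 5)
Your closing alternative---simply invoking \cite[Thm.1.39]{Lu02}---is exactly the paper's route: Lemma \ref{Lem3} is stated there as a direct quotation of that theorem (after replacing $M$, up to homotopy, by the mapping torus of the monodromy acting on $F$), and the paper offers no independent proof. So the citation route is fine, and your bookkeeping around it (the factorization $f_*=\beta\circ\a$, $\a(\pi_1(F))=K=\ker\beta$, the identification of $M_\G$ as a telescope of copies of the $K$-cover $\ti F$) is correct.

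The self-contained argument you sketch, however, has a genuine gap at exactly the step you flag as the principal one, and it does not ``reproduce L\"uck's argument'': the claim that $1-t\cdot\ti\phi_*$ is a weak isomorphism is not justified by what you say. In each degree this map has the form $1-R_t\circ\Phi$ on $\ng^{r_p}$, where $\Phi$ is given by a matrix over $\Z[K]$ coming from the twisted monodromy; it is not right multiplication by the group-ring element $1-t$, and it is in general neither unitary nor normal, so the spectral-measure fact about $1-u$ for an infinite-order $u\in\G$ gives no control over its kernel or cokernel. Likewise, knowing that $\ti\phi_*$ is a chain homotopy equivalence only makes $\Phi$ invertible up to homotopy, and $1-(\text{invertible})$ can very well have large $\dim_{\ng}$-kernel; nothing ``promotes''. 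Indeed, via the Wang long exact sequence and additivity of $\dim_{\ng}$, the statement that $1-t\cdot\ti\phi_*$ induces a dimension-isomorphism on $\ng$-homology is \emph{equivalent} to the vanishing you are trying to prove, so as written the crucial step is circular. This is precisely why L\"uck's proof of Theorem 1.39 proceeds differently: the pullback of the mapping torus along the $n$-fold cover of $S^1$ is homotopy equivalent to the mapping torus of $\phi^n$, hence to a CW complex whose number of $p$-cells is bounded independently of $n$, and multiplicativity of $L^2$--Betti numbers under finite coverings then gives $b_p^{(2)}(M,\a)\le C/n\to 0$. If you want a proof rather than a citation, that finite-cover argument is the one to write out (noting also that one needs $F$ to have the homotopy type of a CW complex of finite type, as it does in the paper's applications, where $F$ is a Milnor fiber).
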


\subsection{$L^2$--Betti numbers and Cochran--Harvey invariants}\label{CH}

A group $\G$ is called \emph{locally indicable} if for
every finitely generated non--trivial subgroup $H\subset \G$ there
exists an epimorphism $H\to \Z$.  In the following we refer to a locally indicable
torsion--free amenable group as a LITFA group.  We refer to  \cite[p.256]{Lu02} for the
definition of an {\it amenable} group, but  we note that any solvable group is
amenable, while groups containing the free groups on two generators are not amenable. Also, note that a subgroup of a LITFA group is itself a LITFA group.

Denote by $S$ the set of non--zero divisors
of the ring $\ng$. By \cite[Prop.2.8]{Re98}
(see also \cite[Thm.8.22]{Lu02}) the pair $(\ng,S)$
satisfies the right Ore condition.
The ring $\ug:=\ng S^{-1}$ is called the
\emph{algebra of operators affiliated to $\ng$}. For any
$\ug$--module $\MC$ we also have a dimension $\dim_{\ug}(\MC)$. By
\cite[Thm.8.31]{Lu02} we have
\[ b_i^{(2)}(M,\a)=\dim_{\ug}(H_i(C_*^{sing}(M_{\G})\otimes_{\Z[\G]}\ug)).\]
We collect below some properties of LITFA groups (see \cite[Thm.2.2]{FLM} and the references therein):

\begin{theorem}\label{thm:tfs}
Let $\G$ be a LITFA group.
\bn
\item All non--zero elements in $\Z[\G]$ are non--zero divisors in $\ng$.
\item  $\Z[\G]$  is an Ore domain and embeds in its classical right ring of
quotients $\K(\G)$, a skew-field.
\item $\kg$ is flat over $\Z[\G]$.
\item There exists a monomorphism $\kg\to \ug$ which makes the following diagram commute
\[ \xymatrix{\Z[\G] \ar[r] \ar[dr] & \kg\ar[d] \\ &\ug.}\]
\en
\end{theorem}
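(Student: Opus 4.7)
The plan is to derive the four assertions from the LITFA hypothesis in sequence, each part drawing on a distinct feature of the hypothesis (torsion-freeness, local indicability, amenability).

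For (1), the starting point is Higman's theorem: the integral group ring of a torsion-free locally indicable group is a domain, so every non-zero $x \in \Z[\G]$ is at least a non-zero element of $\ng$. Upgrading this to the non-zero divisor property inside $\ng$ is the genuinely analytic step, and here amenability is essential. I would view right multiplication $R_x$ by $x$ as a bounded operator on $l^2(\G)$, fix a F\o lner exhaustion $(F_n)$ of $\G$, and approximate $R_x$ by its truncation to $l^2(F_n)$. The F\o lner condition bounds the commutator of $R_x$ with the projection onto $l^2(F_n)$, so L\"uck's approximation theorem identifies $\dim_{\ng}\Ker R_x$ with the limit of the normalized nullities of these finite matrices. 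A support/combinatorial argument, using that $x\neq 0$ in a domain, shows that the nullities are $o(|F_n|)$, forcing $\dim_{\ng}\Ker R_x = 0$; this is exactly the non-zero divisor property.

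Given (1), the remaining parts are essentially formal. Non-zero elements of $\Z[\G]$ now lie in the Ore set $S \subset \ng$, so they become units in $\ug = \ng S^{-1}$. For (2), one checks that $\Z[\G]$ itself satisfies the right Ore condition, either by invoking Tamari's theorem for amenable group rings or, more cheaply, by noting that $\Z[\G]$ embeds into the Ore domain $\ng S^{-1}$; localizing at all non-zero elements then produces a skew-field $\kg$. Part (3) is immediate since classical Ore localizations are always flat. For (4), the universal property of $\kg$ applied to the inclusion $\Z[\G]\hookrightarrow\ug$ (in which non-zero elements of $\Z[\G]$ are already units by (1)) yields a ring map $\kg\to\ug$ making the required triangle commute; injectivity is automatic since $\kg$ is a skew-field.

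The main obstacle is clearly (1). Algebraic non-zeroness in a domain does not automatically imply analytic non-zero-divisor-ness in the ambient von Neumann algebra, and indeed this implication fails for non-amenable groups containing free subgroups. The analytic input -- F\o lner averaging coupled with L\"uck's dimension theory -- is what unlocks the entire localization package; (2)--(4) are then routine consequences of Ore theory.
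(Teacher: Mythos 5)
The paper itself does not prove this theorem: it is quoted from \cite[Thm.~2.2]{FLM} ``and the references therein'', so your proposal has to be measured against the standard chain of arguments behind that citation. Your parts (2)--(4) do reproduce it correctly: Higman's theorem, Tamari's theorem (amenability plus absence of zero divisors gives the right Ore condition), flatness of Ore localization, and the universal property of the Ore localization applied to $\Z[\G]\to\ug$, where (1) guarantees that nonzero elements of $\Z[\G]$ lie in $S$ and hence become invertible in $\ug$. One correction in (2): your ``more cheaply'' alternative is false. A domain all of whose nonzero elements become invertible in some overring need not be Ore --- for $\G=F_2$ the ring $\Z[F_2]$ embeds into a skew field (and into $\ug$), yet $(x-1)\Z[F_2]\cap(y-1)\Z[F_2]=0$, so the Ore condition fails; also $\ug=\ng S^{-1}$ is not a domain. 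So Tamari's F\o lner argument is genuinely needed there, not optional.

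The substantive gap is the analytic step in (1). Your finite-window estimate is fine and can be made precise: if $v$ is supported in $F_n$ and killed by the truncation of $R_x$, then $vx$ is supported in $F_n\cdot\mathrm{supp}(x)\setminus F_n$, and injectivity of multiplication by $x$ on $\C[\G]$ (Higman again) bounds the nullity by that boundary's size, which is $o(|F_n|)$. The problem is the approximation input. First, L\"uck's approximation theorem concerns residually finite groups and towers of finite quotients; what you need is the F\o lner-exhaustion version due to Dodziuk--Mathai and Elek. Second, and more importantly, the soft argument you describe (F\o lner commutator bounds, hence weak convergence of spectral measures) only yields $\limsup_n |F_n|^{-1}\dim\ker(\mathrm{truncation})\le \dim_{\ng}\ker R_x$, which is the wrong direction: small nullities of the truncations do not by themselves bound $\dim_{\ng}\ker R_x$. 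The inequality you need, $\dim_{\ng}\ker R_x\le\liminf_n(\cdot)$, requires excluding eigenvalue concentration near $0$, i.e.\ the hard determinant-bound step (available here because $x\in\Z[\G]$, so the truncated matrices are integral); your sketch silently assumes it. Either prove or cite that version in full, or --- cleaner, and closer to what actually underlies \cite[Thm.~2.2]{FLM} --- bypass approximation entirely: by L\"uck's dimension-flatness theorem for amenable groups \cite[Thm.~6.37]{Lu02}, tensoring the exact sequence $0\to\C[\G]\xrightarrow{\cdot x}\C[\G]$ with $\ng$ gives $\dim_{\ng}\{T\in\ng : Tx=0\}=0$; this annihilator is a weakly closed left ideal, hence of the form $\ng p$ for a projection $p$ with $\mathrm{tr}(p)=0$, so it vanishes by faithfulness of the trace, and $x$ is a non-zero-divisor in $\ng$ with no F\o lner argument at all.
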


\begin{remark} Since for a LITFA group $\G$, $\K(\G)$ is a skew-field, it follows that any $\K(\G)$-module is free. In particular, $\ug$ is flat as a $\kg$-module.
\end{remark}

The following result of \cite{FLM} relates $L^2$--Betti numbers to ranks of modules over skew fields.

\begin{proposition}(\cite[Prop.2.3]{FLM}) \label{prop:b2kg}
Let $\a:\pi_1(M)\to \G$ be an epimorphism to a
LITFA group $\G$. Then 
\[  b_i^{(2)}(M,\a)=\dim_{\K(\G)}(H_i(M;\K(\G))). \]
\end{proposition}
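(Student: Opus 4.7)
The plan is to pass from $\mathcal{U}(\G)$-coefficients to $\K(\G)$-coefficients via the two flat ring extensions coming out of Theorem \ref{thm:tfs}. Recall that $b_i^{(2)}(M,\a)$ is defined (equivalently) as $\dim_{\ug}H_i(C_*^{sing}(M_\G)\otimes_{\Z[\G]}\ug)$, and that we have ring maps $\Z[\G]\hookrightarrow \K(\G)\hookrightarrow \ug$ in which $\K(\G)$ is flat over $\Z[\G]$ (Theorem \ref{thm:tfs}(3)) and $\ug$ is flat over $\K(\G)$ (Remark following Theorem \ref{thm:tfs}, using that $\K(\G)$ is a skew-field so every $\K(\G)$-module is free).

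First I would factor the coefficient change. By flatness of $\K(\G)$ over $\Z[\G]$, one has
\[
H_i(M;\K(\G)) \;=\; H_i\bigl(C_*^{sing}(M_\G)\otimes_{\Z[\G]}\K(\G)\bigr).
\]
By flatness of $\ug$ over $\K(\G)$, tensoring the chain complex $C_*^{sing}(M_\G)\otimes_{\Z[\G]}\K(\G)$ with $\ug$ over $\K(\G)$ commutes with taking homology, yielding a natural isomorphism
\[
H_i\bigl(C_*^{sing}(M_\G)\otimes_{\Z[\G]}\ug\bigr)\;\cong\;H_i(M;\K(\G))\otimes_{\K(\G)}\ug.
\]

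Second, it remains to prove the dimension identity
\[
\dim_{\ug}\bigl(V\otimes_{\K(\G)}\ug\bigr)\;=\;\dim_{\K(\G)}V
\]
for every $\K(\G)$-module $V$, applied to $V=H_i(M;\K(\G))$. Because $\K(\G)$ is a skew-field, $V$ is free, so write $V\cong \bigoplus_{j\in J}\K(\G)$ with $\dim_{\K(\G)}V=|J|\in\{0,1,2,\dots\}\cup\{\infty\}$. Then $V\otimes_{\K(\G)}\ug\cong\bigoplus_{j\in J}\ug$, and the equality follows from additivity of $\dim_{\ug}$ on direct sums and its cofinality/continuity property (Lück, \cite[Thm.6.7 and Thm.8.29]{Lu02}) applied to the filtered colimit over finite subsets of $J$, together with $\dim_{\ug}\ug=1$. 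Combining this with the previous step gives $b_i^{(2)}(M,\a)=\dim_{\K(\G)}H_i(M;\K(\G))$.

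The main obstacle I would expect is the dimension identity in the infinite-rank case: for finite rank it is immediate from $\dim_{\ug}\ug^n=n$, but for a general (possibly uncountably generated) $\K(\G)$-module one needs the fact that $\dim_{\ug}$ is well-behaved under arbitrary direct sums, which rests on the additivity and cofinality of the extended dimension function for the semi-hereditary ring $\ug$. Granting that technical input from Lück's book, the rest of the argument is a formal two-step base change.
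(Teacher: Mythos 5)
Your argument is correct and is essentially the proof of \cite[Prop.2.3]{FLM}, which this paper cites rather than reproves: one rewrites $b_i^{(2)}(M,\a)$ via $\dim_{\ug}$ using L\"uck's Theorem 8.31, base-changes through $\Z[\G]\subset\K(\G)\subset\ug$ using flatness of $\ug$ over the skew-field $\K(\G)$, and concludes with the additivity/cofinality of the extended dimension function to handle possibly infinite rank. The only cosmetic point is that flatness of $\K(\G)$ over $\Z[\G]$ is not actually needed for your first displayed identification, which is just the definition of homology with $\K(\G)$-coefficients; the essential flatness input is $\ug$ over $\K(\G)$.
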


A group $\G$ is called {\it poly--torsion--free--abelian} (PTFA) if there exists a normal series
\[ 1=\G_0 \subset \G_1\subset \dots \subset \G_{n-1}\subset \G_n=\G \]
such that $\G_{i}/\G_{i-1}$ is torsion free abelian.
It is easy to see that  PTFA groups are LITFA. Note that the quotients $\pi/\pi_r^{(k)}$ of a group by terms in the rational derived series are PTFA (cf. \cite{Co04,Ha05}).

We now recall the definition of the Cochran--Harvey invariants (which in the context of complex algebraic geometry were first studied in \cite{LM06}, for plane curve complements).  Let $X$ be a hypersurface in $\C^{n}$, with complement $M_X:=\C^{n} \setminus X$.
Furthermore let
$\a:\pi_1(M_X)\to \G$ be an {\it admissible} epimorphism to a LITFA group. Recall that ``admissible" means that
 there exists a map $\wti\phi:\G\to \Z$ such that the following diagram commutes
\[ \xymatrix{ \pi_1(M_X)\ar[rr]^{\a}\ar[dr]^\phi && \G\ar[dl]_{\wti\phi}\\
 &\Z&}\]
 where $\phi:\pi_1(M_X)\to \Z$ is the total linking homomorphism.
Denote by $\widetilde M_X$ the infinite cyclic cover of $M_X$ defined by the kernel of the total linking number homomorphism $\phi$. Let  $\wti{\G}$ be the kernel of $\ti\phi:\G\to \Z$ and denote the induced homomorphism $\pi_1(\widetilde M_X)\to \ti{\G}$ by $\ti{\a}$.

Now consider the homomorphism $\pi_1(M_X)\to
\pi_1(M_X)/\pi_1(M_X)_r^{(m+1)}=:\G_m$. It is easy to see that this
homomorphism is admissible. As in \cite{LM06} we now define
\[ \delta_{i,m}(X)=\dim_{\K(\ti{\G}_m)}(H_i(\widetilde M_X;\K(\ti{\G}_m))).\]
The following result, which is an immediate corollary to Prop.\ref{prop:b2kg},   shows that the $L^2$--Betti numbers of $\widetilde M_X$ can be viewed as a generalization of the Cochran--Harvey invariants of affine hypersurface complements.

\begin{theorem} \label{thm:dimb2}
Let $X \subset \C^{n}$ be an affine hypersurface with complement $M_X$, and let $\a:\pi_1(M_X)\to \G$ be an admissible epimorphism to a LITFA group. Then, in the above notations, we have
\[ \dim_{\K(\ti{\G})}(H_i(\widetilde M_X;\K(\ti{\G})))=b_i^{(2)}(\widetilde M_X,\ti{\a}:\pi_1(\widetilde M_X)\to \ti{\G}).\]
\end{theorem}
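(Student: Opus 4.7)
The plan is to deduce this statement directly from Proposition \ref{prop:b2kg} applied to the infinite cyclic cover $\wti M_X$ with the induced map $\ti{\a}$. The main verifications are that $\ti{\G}$ is itself a LITFA group and that $\ti{\a}$ is an epimorphism, so that Proposition \ref{prop:b2kg} is applicable to the pair $(\wti M_X, \ti{\a})$.

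First I would note that $\ti{\G} = \ker(\ti\phi: \G \to \Z)$ is a subgroup of the LITFA group $\G$, and the class of LITFA groups is closed under taking subgroups (as already remarked just before Theorem \ref{thm:tfs}). Hence $\ti{\G}$ is LITFA, so that in particular the skew-field $\K(\ti{\G})$ and the associated algebra $\UC(\ti\G)$ of affiliated operators are defined, and all the machinery summarized in Theorem \ref{thm:tfs} applies to $\ti{\G}$.

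Next I would verify that $\ti\a:\pi_1(\wti M_X) \to \ti{\G}$ is surjective. Since $\wti M_X$ is the infinite cyclic cover associated to the total linking number homomorphism $\phi$, we have $\pi_1(\wti M_X) = \ker(\phi)$, viewed as a subgroup of $\pi_1(M_X)$. Given $g \in \ti{\G} = \ker(\ti\phi)$, admissibility of $\a$ gives $\phi = \ti\phi \circ \a$, so any lift $h \in \pi_1(M_X)$ with $\a(h) = g$ satisfies $\phi(h) = \ti\phi(g) = 0$, i.e.\ $h \in \pi_1(\wti M_X)$. Hence $\ti\a$ is an epimorphism onto $\ti{\G}$.

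Finally, the regular covering of $\wti M_X$ associated to $\ker(\ti\a)$ is canonically identified with the regular covering of $M_X$ associated to $\ker(\a)$, so the chain complex $C^{sing}_*((\wti M_X)_{\ti\G}) \otimes_{\Z[\ti\G]} \K(\ti\G)$ computes $H_*(\wti M_X; \K(\ti\G))$. Applying Proposition \ref{prop:b2kg} to the pair $(\wti M_X, \ti\a)$ directly yields
\[ b_i^{(2)}(\wti M_X, \ti\a) = \dim_{\K(\ti\G)} H_i(\wti M_X; \K(\ti\G)), \]
which is the desired identity. There is no real obstacle; the only things to be careful about are that we never assume $\wti M_X$ has the homotopy type of a finite CW complex (the proposition and the relevant part of Theorem \ref{thm:tfs} do not require this), and that admissibility is precisely what makes $\ti\a$ land in, and surject onto, $\ti{\G}$.
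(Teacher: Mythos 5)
Your argument is exactly the paper's intended route: the paper gives no separate proof, stating only that the theorem is an immediate corollary of Proposition \ref{prop:b2kg}, applied to the pair $(\widetilde M_X,\ti{\a})$. Your verifications --- that $\ti{\G}$ is LITFA as a subgroup of a LITFA group, that admissibility makes $\ti{\a}$ an epimorphism onto $\ti{\G}$, and that the $\ti\G$-cover of $\widetilde M_X$ is the $\G$-cover of $M_X$ --- correctly fill in the details left implicit there.
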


\section{Vanishing of $L^2$-Betti numbers of hypersurface complements}\label{van}

Let $X$ be a reduced hypersurface in $\C^{n}$ ($n \geq 2$), defined by the equation
$f=f_1\cdots f_s =0$, where $f_i$ are the irreducible factors of
$f$, and let $X_i=\{f_i=0\}$ denote the irreducible components of
$X$. Embed $\C^{n}$ in $\C\PP^{n}$ by adding the hyperplane at infinity, $H$,
and let $\bar{X}$ be the projective hypersurface in $\C\PP^{n}$ defined by
the homogenization $f^h$ of $f$. 
Let $M_X$ denote the affine hypersurface complement 
$$M_X:=\C^n \setminus X.$$
Alternatively, $M_X$ can be regarded as the complement in $\cp^n$ of the
divisor $\bar X \cup H$. Then $H_1 (M_X)$ is
free abelian,  generated by the meridian loops $\gamma_i$ about the
non-singular part of each irreducible component ${X}_i$, for
$i=1,\cdots, s$ (cf. \cite{Di92}, (4.1.3), (4.1.4)).

Since $M_X$ is a $n$-dimensional affine variety, it has the
homotopy type of a finite CW-complex of dimension $n$ (e.g., see  (cf. \cite{Di92}, (1.6.7), (1.6.8)). Hence \be b_i^{(2)}(M_X,{\a})=0  \ ,\ \ {\rm for \ all} \ \  i>n.\ee

Let us now recall our notations. We start with an admissible epimorphism 
${\a}:\pi_1(M_X)\to {\G}$ to a group $\G$, and consider the induced epimorphism  $\ti{\a}:\pi_1(\widetilde M_X)\to \ti{\G}$, where $\ti{\G}:=\ker (\ti\phi:\G \to \Z)$, and $\widetilde M_X$ is the infinite cyclic cover of $M_X$ defined by the total linking number homomorphism $\phi$. 
As already noted in the introduction, $\phi$ coincides with the homomorphism $\pi_1(M_X) \to \pi_1(\C^*)=\Z$ induced by the polynomial map $f$ (cf. \cite[p.76-77]{Di92}).
The admissibility assumption implies that  the $\Gamma$-cover of $M_X$ defined by $\a$  factors through the infinite cyclic cover $\widetilde M_X$.

We begin our investigation of $L^2$--Betti numbers of hypersurface complements with the following special case:

\begin{proposition}\label{wh}
Let $X$ be an affine hypersurface defined by a weighted homogeneous polynomial
$f:\C^{n} \to \C$. Then 
\begin{enumerate}
\item All $L^2$--Betti numbers $b_i^{(2)}(M_X,{\a})$ of the complement $M_X$ vanish.
\item All $L^2$--Betti numbers $b_i^{(2)}(\widetilde M_X,\ti{\a})$ of the infinite cyclic cover $\widetilde M_X$ are finite, and  $b_i^{(2)}(\widetilde M_X,\ti{\a})=0$ for $i\geq n$.
\end{enumerate}
\end{proposition}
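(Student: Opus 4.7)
Both parts rest on the global Milnor fibration. Since $f$ is weighted homogeneous, by \cite{Mi68} the restriction
\[ f : M_X \to \C^* \]
is a locally trivial smooth fibration whose fiber is the global Milnor fiber $F := f^{-1}(1)$, and whose induced homomorphism on fundamental groups is precisely the total linking number map $\phi : \pi_1(M_X) \to \Z$. These are the only geometric inputs the argument needs.

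For part (1), admissibility of $\a$ gives a factorization $\phi = \ti\phi \circ \a$ in which $\a : \pi_1(M_X) \to \G$ and $\ti\phi : \G \to \Z$ are both surjective (surjectivity of $\ti\phi$ is forced by that of $\phi$ and $\a$). Restricting $f$ to $f^{-1}(S^1) \to S^1$ yields, because $S^1 \hookrightarrow \C^*$ is a homotopy equivalence, a fibration over $S^1$ with connected fiber $F$ whose total space is homotopy equivalent to $M_X$ in a manner preserving the above $\pi_1$-data. Lemma \ref{Lem3}, combined with the homotopy invariance of $L^2$-Betti numbers (Lemma \ref{lem:propb2}(1)), then forces $b_i^{(2)}(M_X, \a) = 0$ for every $i \geq 0$.

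For part (2), the infinite cyclic cover $\widetilde M_X$ is the pullback of $f$ along the universal cover $\exp : \C \to \C^*$; since $\C$ is contractible and $\widetilde M_X \to \C$ is a fiber bundle with fiber $F$, one has a homotopy equivalence $\widetilde M_X \simeq F$. By homotopy invariance it therefore suffices to bound the $L^2$-Betti numbers of $F$ with respect to the induced homomorphism $\pi_1(F) \cong \pi_1(\widetilde M_X) \xrightarrow{\ti\a} \ti\G$. For a weighted homogeneous $f$, the $\C^*$-action $t \cdot (x_1,\dots,x_n) = (t^{w_1} x_1, \dots, t^{w_n} x_n)$ rescales critical values by $t^d$, so Sard's theorem forces every critical value of $f$ to be zero; in particular $F$ is a smooth affine variety of complex dimension $n-1$. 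By Andreotti--Frankel, $F$ has the homotopy type of a finite CW complex of real dimension at most $n-1$. Consequently, the cellular chain complex of the regular $\ti\G$-covering of $F$ is concentrated in degrees $\leq n-1$ and consists of finitely generated free $\Z[\ti\G]$-modules; tensoring with $\mathcal{N}(\ti\G)$ yields a complex of finitely generated free $\mathcal{N}(\ti\G)$-modules whose homology dimensions are therefore all finite and vanish for $i \geq n$.

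\textbf{Main obstacle.} There is no genuine difficulty here: the proof is purely structural, reducing both assertions to the global Milnor fibration and to the Andreotti--Frankel dimensional bound on the Milnor fiber. The only point requiring care is the passage from the fibration over $\C^*$ to one over $S^1$ needed to apply Lemma \ref{Lem3} verbatim, but this is routine via homotopy invariance.
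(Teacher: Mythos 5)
Your proof is correct and follows essentially the same route as the paper: the global Milnor fibration plus Lemma \ref{Lem3} for part (1), and the homotopy equivalence $\widetilde M_X \simeq F$ together with the fact that $F$ has the homotopy type of a finite CW complex of dimension $n-1$ for part (2). The only differences are cosmetic: you rederive the dimension bound on $F$ via Sard and Andreotti--Frankel where the paper simply cites Milnor/Dimca, and the connectedness of $F$ (needed to invoke Lemma \ref{Lem3}) deserves its one-line justification---surjectivity of $\phi=f_*$ and the homotopy exact sequence of the fibration---which the paper instead obtains from the $(n-s-2)$-connectivity of the Milnor fiber of a reduced hypersurface singularity.
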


\begin{proof}
Since the defining polynomial $f$ of $X$ is weighted homogeneous, there exist a global {\it Milnor fibration} (e.g., see  \cite{Mi68} or \cite{Di92}, (3.1.12)): $$F=\{f=1\}
\hookrightarrow M_X=\C^{n} \setminus X \overset{f}{\to} \C^{\ast}.$$ 
Moreover, the fiber $F$ has the homotopy type of a finite CW-complex of dimension $n-1$, and $F$ is $(n-s-2)$-connected, where $s$ is the dimension of the singular locus of the hypersurface singularity germ $(X,0)$. In particular, since $X$ is reduced, $F$ is connected.
The vanishing of $L^2$--Betti numbers $b_i^{(2)}(M_X,{\a})$ of the complement follows now from Lemma \ref{Lem3}. 

Note the Milnor fiber $F$ is homotopy equivalent to the infinite cyclic cover $\widetilde M_X$ of $M_X$ corresponding to the kernel of the total linking number homomorphism. 
It follows that $\widetilde M_X$ has the homotopy type of a {\it finite} CW complex of dimension $n-1$, so the claim about the finiteness of the $L^2$--Betti numbers of $\wti M_X$ follows readily.

\end{proof}

\begin{example} If $X$ is a {\it central} hyperplane arrangement in $\C^n$ (i.e., all hyperplanes pass through the origin), then Prop.\ref{wh} yields that all $L^2$--Betti numbers $b_i^{(2)}(M_X,{\a})$ of the complement $M_X$ vanish; this fact also follows from \cite{DJL07}.
\end{example}

Affine hypersurfaces defined by homogeneous polynomials are basic examples of {\it hypersurfaces in general position at infinity}, i.e., hypersurfaces $X \subset \C^{n}$ for which the hyperplane at infinity $H$ is transversal in a stratified sense to the projective completion $\bar X \subset \cp^{n}$.  
In \cite{Ma06}, the author showed that for affine hypersurfaces $X$  in general position at infinity  the ordinary Betti numbers $b_i(\widetilde M_X)$ of the infinite cyclic cover $\wti M_X$  are finite for all $0\leq i \leq n-1$. In this paper we give a non-commutative generalization of this fact. We begin with the following comparison result.

\begin{theorem}\label{infinity} Let $X$ be a hypersurface in $\C^{n}$, and let
$S^{\infty}$ be a $(2n-1)$-sphere in $\C^{n}$ of a sufficiently large radius
(that is, the boundary of a small tubular neighborhood in $\C\PP^{n}$
of the hyperplane $H$ at infinity). Denote by
$X^{\infty}=S^{\infty} \cap X$ the link of $X$ at infinity, and by 
 $M_X^{\infty}=S^{\infty} - X^{\infty}$ its complement in $S^{\infty}$. 
 Let $\a^{\infty}$ be the composition map $$\pi_1(M_X^{\infty})  \to \pi_1(M_X) \to \G.$$ Denote by $\wti  M_X^{\infty}$ the infinite cyclic cover of $M_X^{\infty}$ defined by the composition 
 $$\pi_1(M_X^{\infty}) \to \pi_1(M_X) \overset{\phi}{\to} \Z,$$ and let $\ti\a^{\infty}:\pi_1(\wti M_X^{\infty}) \to \ti\G$ be the induced homomorphism to $\ti\G:=\ker\{\ti \phi:\G\to \Z\}$. 
Finally, let $b_i^{(2)}(M_X^{\infty};\a^{\infty})$ and $b_i^{(2)}(\wti M_X^{\infty};\ti\a^{\infty})$ be the  $L^2$--Betti numbers of $M_X^{\infty}$ and $\wti M_X^{\infty}$, respectively.
 
Then for all $i\leq n-1$  we have the inequalities
\begin{equation}\label{inf}
b_i^{(2)}(\widetilde M_X,\ti{\a}) \leq b_i^{(2)}(\wti M_X^{\infty};\ti\a^{\infty})
\end{equation}
and
\be\label{infb}
b_i^{(2)}(M_X,{\a}) \leq b_i^{(2)}(M_X^{\infty};\a^{\infty}) ,
\ee
with equalities in (\ref{inf}) and (\ref{infb}) if $i<n-1$.
\end{theorem}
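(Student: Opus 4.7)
The plan is to reduce both inequalities to a single geometric input — that $M_X$ can be built from $M_X^\infty$ by attaching cells of real dimension at least $n$ — and then apply Lemma \ref{Lem2} to the appropriate covers.

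\textbf{Step 1: Lefschetz-type CW pair structure.} Pick $R \gg 0$ so that $S^\infty = \partial\overline{B_R^{2n}}$ meets $X$ transversally. Then $M_X$ deformation retracts (via the radial retraction of $\C^n$ onto $\overline{B_R^{2n}}$) onto the compact manifold with boundary $W := M_X \cap \overline{B_R^{2n}}$, whose boundary is $M_X^\infty$. Since $M_X$ is a smooth affine variety, hence a Stein manifold of complex dimension $n$, the Andreotti--Frankel theorem applied to the strictly plurisubharmonic Morse function $|z|^2$ on $M_X$ ensures that $|z|^2$ has Morse critical points in $\mathrm{int}(W)$ only of index $\leq n$. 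Dually, $-|z|^2$ has critical points of index $\geq n$ in $\mathrm{int}(W)$, so Morse theory produces a handle decomposition of $(W, \partial W) = (W, M_X^\infty)$ using handles of index $\geq n$. In other words, $M_X \simeq W$ admits a finite CW structure in which $M_X^\infty$ is a subcomplex and all relative cells have dimension $\geq n$; this is the same argument used in \cite{Ma06}.

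\textbf{Step 2: Lift to the common cover.} Let $p:\widehat M_X \to M_X$ denote the $\G$-cover associated to $\a$; by admissibility, $\phi$ factors through $\a$, and a direct check shows $\widehat M_X$ coincides with the $\ti\G$-cover of $\widetilde M_X$ associated to $\ti\a$. Set $\widehat M_X^\infty := p^{-1}(M_X^\infty)$. The CW pair from Step 1 lifts to a free $\G$-CW pair $(\widehat M_X, \widehat M_X^\infty)$ of finite type (equivalently, a free $\ti\G$-CW pair) whose relative cells still have dimension $\geq n$. Therefore the relative cellular chain complex vanishes in degrees $< n$, and the long exact sequence of the pair shows that inclusion induces an isomorphism $H_i(\widehat M_X^\infty;\C) \xrightarrow{\cong} H_i(\widehat M_X; \C)$ for $i \leq n-2$ and a surjection for $i = n-1$.

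\textbf{Step 3: Invoke Lemma \ref{Lem2} twice.} Applying Lemma \ref{Lem2} with $d = n-1$ to the inclusion $M_X^\infty \hookrightarrow M_X$, endowed with the $\G$-covers, yields (\ref{infb}) with equality for $i < n-1$. Applying the same lemma to $\widetilde M_X^\infty \hookrightarrow \widetilde M_X$, now reading $\widehat M_X \to \widetilde M_X$ and $\widehat M_X^\infty \to \widetilde M_X^\infty$ as $\ti\G$-covers, yields (\ref{inf}) with equality for $i < n-1$. The main (in fact only non-formal) obstacle is Step 1: the Lefschetz-type statement controlling the relative cell dimensions of $(M_X, M_X^\infty)$; once this is in place, the slight subtlety is to recognise that by admissibility the two instances of Lemma \ref{Lem2} are powered by the same free cover $\widehat M_X$, so a single homological computation delivers both (\ref{inf}) and (\ref{infb}).
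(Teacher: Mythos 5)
Your Steps 2 and 3 are fine and coincide with what the paper does (lift the cell statement to the $\G$-covers, observe that the $\G$-cover of $M_X$ is simultaneously a $\ti\G$-cover of $\wti M_X$, and apply Lemma \ref{Lem2} twice). The genuine gap is in Step 1, which you yourself identify as the only non-formal input. First, $W=M_X\cap \overline{B_R^{2n}}$ is not a compact manifold with boundary (it fails to be compact precisely along $X\cap B_R$), and the radial retraction of $\C^n$ onto the ball does not preserve $M_X$. More seriously, $|z|^2$ is not a proper exhaustion of $M_X$: its sublevel sets in $M_X$ are non-compact, the (negative) gradient flow escapes toward $X$ in finite time, and so Andreotti--Frankel/Morse theory applied to $|z|^2$ on $M_X$ does not produce any handle decomposition of $(W,M_X^\infty)$. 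If you repair this by realizing $M_X$ as the closed affine hypersurface $\{wf(z)=1\}\subset\C^{n+1}$ and using a genuine proper strictly plurisubharmonic exhaustion, the upside-down handle argument does apply, but it attaches the dual handles of index $\geq n$ along a collar of the \emph{entire} boundary of a compact model of $M_X$, and that boundary is the link of the whole divisor $\bar X\cup H$ --- it contains the boundary of a tubular neighborhood of the affine part of $X$, not just the part at infinity. So Stein duality only yields $(n-1)$-connectivity of $(M_X,\partial)$ for the wrong $\partial$, and this does not formally imply $\pi_i(M_X,M_X^\infty)=0$ for $i\leq n-1$, which is the statement the rest of your argument (and the theorem) needs.

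That connectivity statement is a Lefschetz-hyperplane-type theorem ``at infinity'': the paper obtains it by identifying $M_X^\infty$, up to homotopy, with the deleted neighborhood $T(H)\setminus (\bar X\cup H)$ of the hyperplane at infinity and invoking the argument of \cite{DL}, Section 4.1, which shows $\pi_i(M_X^\infty)\to\pi_i(M_X)$ is an isomorphism for $i<n-1$ and onto for $i=n-1$ (this also gives the surjectivity of $\a^\infty$). Your proposal would be correct if Step 1 were replaced by this (or an equivalent) Lefschetz-type input; as written, the Morse-theoretic shortcut does not prove it.
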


\begin{proof} First, it is clear that $\a^{\infty}$ is an admissible map.
Next, note that $M_X^{\infty}$ is homotopy equivalent to
$T(H) \setminus \bar{X} \cup H$, where $T(H)$ is the tubular neighborhood of $H$ in $\C\PP^{n}$ for which $S^{\infty}$ is the boundary.  Then a classical argument based on the Lefschetz hyperplane theorem yields that the homomorphism $\pi_i(M_X^{\infty}) \to \pi_i(M_X)$ is an isomorphism for $i < n-1$ and it is surjective for $i=n-1$; see \cite{DL}[Section 4.1] for more details.
In particular, $\a^{\infty}$ is an epimorphism, as is the composite homomorphism $\pi_1(M_X^{\infty}) \to \Z$.

From the above considerations, it follows that \be\label{eq}\pi_i(M_X,M_X^{\infty})=0 \ , \  {\rm for \ all} \  \ i \leq n-1,\ee
hence $M_X$ has the homotopy type of a complex obtained from $M_X^{\infty}$ by adding cells of dimension $\geq n$. So the same is true for any covering, and in particular for the corresponding $\Gamma$-coverings. 
So  the group homomorphisms
\be\label{gh}H_i((M_X^{\infty})_{\Gamma};\Z) \to H_i((M_X)_{\Gamma};\Z)\ee
are isomorphisms if $i < n-1$ and surjective for $i=n-1$. Since these homorphisms are induced by an embedding map, they are in fact homomorphisms of $\Z\Gamma$-modules. The (in)equalities in (\ref{infb}) follow now from Lemma \ref{Lem2}.

Next note that the $\G$-cover $(M_X)_{\Gamma}$ of $M_X$ is a $\ti\G$-cover of the infinite cyclic cover $\ti M_X$. Similar considerations apply to the covers of $M_X^{\infty}$. So (\ref{gh}) can be restated as saying that  the group homomorphisms
\be\label{gh2}H_i((\ti M_X^{\infty})_{\ti\Gamma};\Z) \to H_i((\ti M_X)_{\ti\Gamma};\Z)\ee
are isomorphisms if $i < n-1$ and surjective for $i=n-1$. And, as before, these are in fact homomorphisms of $\Z\ti\G$-modules. Another application of Lemma \ref{Lem2} yields the (in)equalities of (\ref{inf}).

\end{proof}

In the next result, we restrict our attention to the case of hypersurfaces in general position at infinity. As it was already observed in a sequence of papers, e.g., see \cite{DL,DM07,Ma06},  such hypersurfaces behave much like weighted homogeneous hypersurfaces up to homological degree $n-1$.

\begin{theorem}\label{main} Assume that the affine hypersurface $X \subset \C^n$ is in general position at infinity, i.e., the hyperplane at infinity $H$ is transversal in the stratified sense to the projective completion $\bar X$. Then 
\begin{enumerate} 
\item The $L^2$--Betti numbers $b_i^{(2)}(\widetilde M_X,\ti{\a})$ of the infinite cyclic cover $\wti M_X$ are finite for all $0\leq i \leq n-1$. In particular, the Cochran-Harvey higher-order degrees $\delta_{i,m}(X)$ are finite for $0\leq i \leq n-1$ and all integers $m$.
\item The $L^2$--Betti numbers of the complement $M_X$ are computed by
\[ b_i^{(2)}(M_X,\a)= \left\{ \ba{rl} 0, & \mbox{ for }i\ne n, \\ (-1)^{n}\chi(M_X),
&\mbox{ for }i=n. \ea \right.\] 
In particular,  $$(-1)^n \cdot \chi(M_X) \geq 0.$$
\end{enumerate}
\end{theorem}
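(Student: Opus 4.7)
The plan is to reduce both statements to the weighted homogeneous case of Proposition \ref{wh} via the comparison result of Theorem \ref{infinity}. The essential geometric input is that, under the transversality assumption, a tubular neighborhood $T(H)$ of $H$ in $\cp^n$ is a $D^2$-bundle over $H\cong \cp^{n-1}$ meeting $\bar X$ in a product-like fashion, so that $M_X^\infty$ is homotopy equivalent to the complement $M_Y$ of the affine cone $Y:=C(\bar X\cap H)\subset \C^n$. Since $Y$ is defined by a homogeneous polynomial (the leading form of $f^h$ restricted to $H$), Proposition \ref{wh} applies to $M_Y$. The identification is compatible with admissible maps, because under it the total linking number homomorphism for $X$ restricts to that for $Y$, so $\a^\infty$ corresponds to an admissible epimorphism on $\pi_1(M_Y)$. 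Making this geometric identification rigorous is the main obstacle: the argument rests on transversality together with a Thom--Mather-type local trivialization of $\bar X\cup H$ near $H$, in the spirit of the Lefschetz-type arguments of \cite{DL,Ma06}.

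Granting this, for part (1) we invoke Proposition \ref{wh}(2) applied to $M_Y$, which yields finiteness of $b_i^{(2)}(\wti M_Y,\ti\a^\infty)$ for all $i$. By homotopy invariance (Lemma \ref{lem:propb2}(1)) this equals $b_i^{(2)}(\wti M_X^\infty,\ti\a^\infty)$, and Theorem \ref{infinity} then gives
\[ b_i^{(2)}(\wti M_X,\ti\a)\;\leq\; b_i^{(2)}(\wti M_X^\infty,\ti\a^\infty)\;<\;\infty \]
for $0\leq i\leq n-1$. The finiteness of the Cochran--Harvey higher-order degrees $\delta_{i,m}(X)$ then follows immediately from Theorem \ref{thm:dimb2} applied to $\G=\G_m$.

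For part (2), Proposition \ref{wh}(1) applied to $M_Y$ gives $b_i^{(2)}(M_Y,\a^\infty)=0$ for all $i$, hence $b_i^{(2)}(M_X^\infty,\a^\infty)=0$ by homotopy invariance. Theorem \ref{infinity} then forces
\[ b_i^{(2)}(M_X,\a)=0 \quad \textrm{for } 0\leq i\leq n-1. \]
Since $M_X$ has the homotopy type of a finite CW-complex of real dimension $n$, we also have $b_i^{(2)}(M_X,\a)=0$ for $i>n$, so only the degree-$n$ term can be non-zero. The Euler characteristic formula of Lemma \ref{lem:propb2}(3) then yields
\[ \chi(M_X)=\sum_i (-1)^i\, b_i^{(2)}(M_X,\a)=(-1)^n\, b_n^{(2)}(M_X,\a), \]
so $b_n^{(2)}(M_X,\a)=(-1)^n\chi(M_X)$. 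Non-negativity of $L^2$-Betti numbers then gives $(-1)^n\chi(M_X)\geq 0$, completing the proof.
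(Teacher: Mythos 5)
Your argument is correct and follows essentially the same route as the paper: reduce to the link at infinity via Theorem \ref{infinity}, identify $M_X^\infty$ (using transversality) with the complement of the affine cone on $\bar X\cap H$, and conclude via the homogeneous Milnor fibration of $h$, with part (2) finished by the Euler characteristic identity of Lemma \ref{lem:propb2}(3) and non-negativity of $L^2$--Betti numbers. The only cosmetic difference is that you package the Milnor-fibration step by citing Proposition \ref{wh} for the cone complement $M_Y$, whereas the paper runs that same argument inline (Lemma \ref{Lem3} for vanishing, finiteness of the Milnor fiber $\{h=1\}$ for part (1)).
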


\begin{proof} For the first part of the theorem, by Thm.\ref{infinity} it suffices 
to show that the $L^2$--Betti numbers $b_i^{(2)}(\wti M_X^{\infty};\ti\a^{\infty})$ are finite for all $0\leq i \leq n-1$. (This was proved in \cite{FLM} for $n=2$.)
Note that since $\bar X$ is transversal to $H$, the space $M_X^{\infty}$ is a circle fibration
over $H \setminus  \bar{X} \cap H$ which is homotopy equivalent to the complement in $\C^{n}$ to the
affine cone over the projective hypersurface $\bar{X} \cap H \subset H=\mathbb{CP}^{n-1}$ (for a similar argument see \cite{DL}, Section 4.1). Let $\{h=0\}$ be the polynomial defining $\bar{X} \cap H$ in $H$. Then the infinite cyclic cover $\wti M_X^{\infty}$ of $M_X^{\infty}$ is
homotopy equivalent to the Milnor fiber $\{h=1\}$ of the (homogeneous) hypersurface
singularity at the origin defined by $h$. In particular,  $\wti M_X^{\infty}$ has the homotopy type of a finite CW-complex. So the claim about the finiteness of $b_i^{(2)}(\wti M_X^{\infty};\ti\a^{\infty})$ follows now from definition. Similarly, the finiteness of the Cochran-Harvey higher-order degrees $\delta_{i,m}(X)$ in the relevant range follows from the considerations of Section \ref{CH}, where these degrees are realized as $L^2$--Betti numbers of the infinite cyclic cover $\wti M_X$.

For the second part of the theorem, note that by the above considerations $M_X^\infty$ is homotopic to the total space of a fibration over $S^1$, namely the Milnor fibration at the origin corresponding to the homogeneous polynomial $h$. So, by Lemma \ref{Lem3} we obtain  that all $L^2$--Betti numbers $b_i^{(2)}(M_X^{\infty};\a^{\infty})$ vanish. The claim about the $L^2$--Betti numbers of $M_X$ now follows from the inequalities (\ref{infb}) of Thm.\ref{infinity}, together with Lem.\ref{lem:propb2}(3).

\end{proof}

\begin{remark} If $\G$ is a LITFA group as in Section \ref{CH}, then the $L^2$--Betti numbers are determined by ranks of homology modules over skew-fields. In this case, the flatness of certain rings involved shows that the finiteness of the $L^2$--Betti number $b_i^{(2)}(\widetilde M_X,\ti{\a})$ of the infinite cyclic cover is equivalent to the vanishing of the $L^2$--Betti number $b_i^{(2)}(M_X,\a)$ of the complement.
\end{remark}

\end{document}